\newcommand{\arxiv}[1]{\href{http://arxiv.org/abs/#1}{\tt arXiv:\nolinkurl{#1}}}
\newcommand{\googlebooks}[1]{(preview at \href{http://books.google.com/books?id=#1}{google books})}
\theoremstyle{plain}
\newtheorem{thm}{Theorem}[]
\newtheorem{cor}[thm]{Corollary}
\newtheorem{lem}[thm]{Lemma}
\newtheorem{prop}[thm]{Proposition}
\theoremstyle{definition}
\newtheorem{rem}[thm]{Remark}
\newcommand{\comment}[1]{}
\newcommand{\be}{\begin{enumerate}}
\newcommand{\ee}{\end{enumerate}}
\begin{document}

\title{A rotational approach to triple point obstructions}

\author{Noah Snyder}


\date{\today}


\begin{abstract}
Subfactors where the initial branching point of the principal graph is $3$-valent are subject to strong constraints called triple point obstructions.  Since more complicated initial branches increase the index of the subfactor, triple point obstructions play a key role in the classification of small index subfactors.  There are two strong triple point obstructions called the triple-single obstruction and the quadratic tangles obstruction.  Although these obstructions are very closely related, neither is strictly stronger.  In this paper we give a more general triple-point obstruction which subsumes both.  The techniques are a mix of planar algebraic and connection-theoretic techniques with the key role played by the rotation operator.
\end{abstract}

\maketitle
%
\section{Introduction}



The principal graph of a subfactor begins with a type $A$ string and then hits an initial branch point (unless the graph is $A_k$ or $A_\infty$).  It is natural to stratify subfactors based on how complex this initial branch point is.  Furthermore, complex initial branches increase the norm of the graph and thus the index of the subfactor.  This means that small index subfactors can only have simple initial branches.   The simplest possibility is an initial triple point (in this case the dual graph also begins with a triple point).  Subfactors beginning with an initial triple point are subject to strong constraints known as triple point obstructions.  For example, a triple point obstruction due to Ocneanu shows that as long as the index is at least $4$ the initial triple point must be at odd depth.   These triple point obstructions play a crucial role in the classification of small index subfactors \cite{MR1317352,1007.1730, index5-part2, index5-part3, index5-part4}.

The current state of the art of triple point obstructions is given in \cite{index5-part2}, but the status is somewhat unsatisfatory as there are two main results neither of which is strictly stronger than the other.  One result applies more generally and proves a certain inequality, while the other (due to Jones  \cite{math/1007.1158}) has stricter assumptions but replaces the inequality with a finite list of values.  The former is proved using connections and the latter using planar algebras.  The main result of this paper is a mutual generalization of these two triple point obstructions which proves the stronger conclusion using only the weaker assumptions.  As one might expect, this paper uses a mix of connections and planar algebras following \cite{index5-part3}.  Furthermore, one can think of this argument as giving an alternate proof of the triple point obstruction from \cite{math/1007.1158}.

Before stating the three relevant results, we fix some notation which we will use throughout the paper.   Suppose that $N \subset M$ is an $n-1$ supertransitive finite index subfactor, let $\Gamma$ and $\Gamma'$ denote the principal and dual principal graphs.  Let $[k]$ denote the quantum number $(\nu^k - \nu^{-k})/(\nu-\nu^{-1})$ where $\nu$ is a number such that the index is $[2]^2$.  Let $\beta$ and $\beta'$ denote the initial triple points at depth $n-1$ (which is necessarily odd by Ocneanu's obstruction), let $\alpha_1$ and $\gamma_1$ be the vertices at depth $n-2$, let $\alpha_2$ and $\alpha_3$ be the two vertices at depth $n$ on $\Gamma$, and let $\gamma_2$ and $\gamma_3$ be the two vertices at depth $n$ on $\Gamma'$.  We will conflate vertices with the corresponding simple bimodules and the corresponding simple projections in the planar algebra.  Assume without loss of generality that $\dim \alpha_2 \geq \dim \alpha_3$ and $\dim \gamma_2 \geq \dim \gamma_3$.

\begin{thm}[Triple-single obstruction]\cite[Thm 3.5]{index5-part2} \label{TS}
If $\gamma_3$ is $1$-valent, then $$\dim(\alpha_2)-\dim(\alpha_3) \leq 1.$$
\end{thm}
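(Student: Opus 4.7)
The plan is to analyze the biunitary connection $W$ at the initial triple point, exploiting the rigidity imposed on a single column of $W$ by the $1$-valence of $\gamma_3$, and then to extract the dimension inequality from an orthogonality relation.

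\emph{Step 1: Dimensions.} By $(n-1)$-supertransitivity, each vertex at depth $k\leq n-1$ has dimension $[k+1]$; in particular, $\dim(\beta)=\dim(\beta')=[n]$ and $\dim(\alpha_1)=\dim(\gamma_1)=[n-1]$. Frobenius reciprocity applied at the triple point gives $\dim(\alpha_2)+\dim(\alpha_3)=[n+1]$ and $\dim(\gamma_2)+\dim(\gamma_3)=[n+1]$. The $1$-valence of $\gamma_3$, meaning $\gamma_3$ has exactly one edge in $\Gamma'$ (going to $\beta'$), forces the tensor decomposition of $\gamma_3$ with the generating bimodule to contain only $\beta'$, so a dimension count yields $\dim(\gamma_3)=[n]/[2]$ and $\dim(\gamma_2)=[n+2]/[2]$.

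\emph{Step 2: The connection at the triple point.} I would set up the $3\times 3$ biunitary connection $W$ whose rows are indexed by the edges $\beta$--$\alpha_i$ in $\Gamma$ and whose columns are indexed by the edges $\beta'$--$\gamma_j$ in $\Gamma'$; biunitarity means that both $W$ and an appropriate dimension-renormalization of its transpose are unitary. The $1$-valence of $\gamma_3$ forces the $\gamma_3$-column of $W$ to be supported on the unique path through $\beta'$, which pins its three entries to ratios of square roots of the dimensions computed in Step 1, up to one overall phase. Biunitarity then constrains the remaining $2\times 2$ block to a one-parameter family.

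\emph{Step 3: Extract the inequality.} Using the column-orthogonality between the $\gamma_2$- and $\gamma_3$-columns of $W$, and substituting the explicit $\gamma_3$-column from Step 2, produces an identity in which $\dim(\alpha_2)-\dim(\alpha_3)$ appears as a sum of two $W$-entries weighted by known dimension factors. The Cauchy--Schwarz inequality applied to this identity, with the unit-norm constraint on the remaining $2\times 2$ block as input, then yields $\dim(\alpha_2)-\dim(\alpha_3)\leq 1$. The principal obstacle is Step 2: interpreting $1$-valence correctly at the level of connection entries and tracking the dimension normalizations through Ocneanu's biunitarity equations carefully enough that the clean bound $1$, rather than some uglier quantum-number expression, emerges in Step 3.
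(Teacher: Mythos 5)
Your Steps 1 and 2 are correct and coincide with the setup that the paper recapitulates from \cite{index5-part2} in Section 3: supertransitivity gives $\dim\beta=[n]$, $\dim\alpha_1=\dim\gamma_1=[n-1]$, the triple point gives $\dim\alpha_2+\dim\alpha_3=\dim\gamma_2+\dim\gamma_3=[n+1]$, $1$-valence gives $\dim\gamma_3=[n]/[2]$, $\dim\gamma_2=[n+2]/[2]$, and the renormalization axiom pins the magnitudes of the branch-matrix entries along $\gamma_3$ (up to phases --- note it is one unknown phase per entry, partially removable by gauge, not a single overall phase). Be aware that the paper itself does not reprove Theorem \ref{TS}: it cites it and instead proves the stronger rotational identity of Theorem \ref{mainresult}, from which Theorem \ref{TS} follows since $\lambda+\lambda^{-1}\leq 2$; your route is the original connection-theoretic one, which is a legitimate target.

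The gap is in Step 3, and it is not a matter of bookkeeping. Orthogonality of the $\gamma_2$- and $\gamma_3$-columns is the wrong relation: the $\gamma_2$-column contains the entries that remain completely undetermined (the ``$?$'' entries in the paper's matrix $U$), and if you estimate that relation by Cauchy--Schwarz you only ever see the combination $|W_{\alpha_2\gamma_3}|^2+|W_{\alpha_3\gamma_3}|^2$, which is proportional to $\dim\alpha_2+\dim\alpha_3=[n+1]$, a quantity already fixed by supertransitivity. No inequality produced this way can distinguish $\dim\alpha_2$ from $\dim\alpha_3$, so your claimed identity in which $\dim\alpha_2-\dim\alpha_3$ ``appears as a sum of two $W$-entries'' does not materialize, and Cauchy--Schwarz points in the wrong direction for a lower bound on a pairing in any case. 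The working argument pairs the pinned $\gamma_3$ data against the other fully determined line, namely the $\gamma_1$ (depth $n-2$) data, whose entries are positive in the diagrammatic gauge with known magnitudes proportional to $\sqrt{\dim\alpha_i}$. That orthogonality reads
\begin{equation*}
1+\sigma\,\dim\alpha_2+\tau\,\dim\alpha_3=0,
\end{equation*}
with $\sigma,\tau$ unimodular, because each product of entries contributes $\sqrt{\dim\alpha_i}\cdot\sqrt{\dim\alpha_i}=\dim\alpha_i$. The inequality is then the reverse triangle inequality, not Cauchy--Schwarz:
\begin{equation*}
\dim\alpha_2-\dim\alpha_3\;\leq\;\bigl|\sigma\,\dim\alpha_2+\tau\,\dim\alpha_3\bigr|\;=\;1 ,
\end{equation*}
equivalently $\tau+\bar\tau=\frac{p^2-q^2-1}{q}$ together with $|\tau+\bar\tau|\leq 2$, exactly as in the paper's computation of $\sigma$ and $\tau$. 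So to repair Step 3, replace the $\gamma_2$--$\gamma_3$ orthogonality and Cauchy--Schwarz by the $\gamma_1$--$\gamma_3$ orthogonality and the unimodularity of the two unknown phases.
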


\begin{thm}[Quadratic tangles obstruction]\cite{math/1007.1158} \label{QT}
Suppose that $\gamma_3$ is $1$-valent and that $\gamma_2$ is $3$-valent, then $$r+\frac{1}{r} = \frac{\lambda+\lambda^{-1}+2}{[n][n+2]}+2$$
where $\lambda$ is the scalar by which rotation acts on the $1$-dimensional perpendicular complement of Temperley-Lieb at depth $n$ and $r = \frac{\dim(\alpha_2)}{\dim(\alpha_3)}$.
\end{thm}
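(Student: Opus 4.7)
The strategy is a hybrid of the planar-algebraic and connection-theoretic techniques used for Theorem~\ref{TS}: I use the rotation operator $\rho$ at depth $n$ to upgrade the inequality of Theorem~\ref{TS} to the exact identity stated, with the univalence of $\gamma_3$ and trivalence of $\gamma_2$ providing the rigidity which isolates the action of $\rho$ to the single scalar $\lambda$.

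Concretely, I would first fix a normalized low-weight vector $S \in P_n$ spanning the one-dimensional perp-$\TL$ direction in question, so that $\rho(S) = \lambda S$. The simple projections $p_{\alpha_i}$ ($i = 2, 3$) on the principal side admit expansions $p_{\alpha_i} = T_i + c_i S + (\text{Temperley--Lieb-orthogonal corrections})$ with $T_i \in \TL$ and $c_i \in \C$; a direct computation, using $\tr(p_{\alpha_i}^2) = \tr(p_{\alpha_i})$ together with the trace of $S$ in a Jones--Wenzl-adapted basis, produces explicit formulas for $c_i^2$ in terms of $\dim \alpha_i$, $[n]$, and $[n+2]$, and this is where the denominator $[n][n+2]$ in the target identity originates. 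In parallel on the dual side, the $1$-valence of $\gamma_3$ provides an especially clean planar-algebraic construction of $p_{\gamma_3}$ (since $\gamma_3$ has a unique edge to $\beta'$), which in turn pins down the inner product $\langle \rho(S), S\rangle$ via a direct trace calculation. Finally, applying $\rho$ to the expansion of $p_{\alpha_i}$, using $\rho(S) = \lambda S$, and equating the result with the expansion computed from the dual side yields a polynomial identity in $\lambda, \lambda^{-1}, r, [n]$, and $[n+2]$; after simplification using the triple-point dimension relations $\dim \alpha_1 + \dim \alpha_2 + \dim \alpha_3 = [2]\dim\beta$ and its dual counterpart, this collapses to the stated formula.

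The main obstacle will be tracking signs, phases, and normalization conventions for $\rho$ acting on simple projections: the appearance of $\lambda + \lambda^{-1} + 2 = (\lambda^{1/2} + \lambda^{-1/2})^2$ in the target strongly suggests that the cleanest intermediate quantity is expressible in terms of $\rho^{1/2}(S) + \rho^{-1/2}(S)$, and hence that one must pay careful attention to the phase of rotation on simple projections at depth $n$. Aside from this bookkeeping, the argument should closely parallel the connection-theoretic derivation of Theorem~\ref{TS}, with the scalar $\lambda$ encoding the additional information that promotes the inequality there to the equality here.
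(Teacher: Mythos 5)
There is a genuine gap at the heart of your plan: you never supply a mechanism that actually ties the rotational eigenvalue $\lambda$ to the dimension data. Expanding $p_{\alpha_2},p_{\alpha_3}$ as $T_i+c_iS$ with $T_i\in\TL$ and computing the $c_i$ from $\tr(p_{\alpha_i}^2)=\tr(p_{\alpha_i})$ only produces moment data for $S$ (norms, $\tr(S^3)$, hence information about $r$); no occurrence of $\lambda$ can come out of such trace identities, because they never involve the rotation. The step meant to bring $\lambda$ in --- ``the $1$-valence of $\gamma_3$ \dots pins down the inner product $\langle\rho(S),S\rangle$ via a direct trace calculation'' --- begs the question: since $\rho(S)=\lambda S$, that inner product \emph{is} $\lambda\|S\|^2$, i.e.\ exactly the unknown the theorem evaluates, and you give no concrete way to compute it from a planar construction of $p_{\gamma_3}$. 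Relatedly, ``applying $\rho$ to the expansion of $p_{\alpha_i}$ \dots and equating the result with the expansion computed from the dual side'' is not a well-defined comparison: $\rho$ preserves the shading and so maps the principal-side $n$-box space to itself, while the dual data $\gamma_2,\gamma_3$ live in the other shading; the two sides are related only through the half-click $\rho^{\frac12}$, and the entire content of the theorem is the computation of the matrix coefficients of $\rho^{\frac12}$ between the perp-$\TL$ vectors $T=\frac{1}{\sqrt r}\alpha_2-\sqrt r\,\alpha_3$ and $\check T$, which your outline leaves unaddressed.

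For comparison, the paper closes exactly this gap with connection-theoretic input (and in fact proves the stronger statement needing only $1$-valence of $\gamma_3$): the $1$-valence together with normalization and unitarity determines the first row and column of the branch matrix and the magnitudes $\sqrt{p/[2][n]}$, $\sqrt{q/[2][n]}$ of two more entries; Lemma \ref{recognize} identifies this gauge representative as the diagrammatic branch matrix; orthogonality of rows gives the real parts of the unknown phases $\sigma,\tau$; and Corollary \ref{eigenformula} says this matrix implements $\rho^{\frac12}$ on $(0,\sqrt q,-\sqrt p)$, whence $\sigma-\tau=\sqrt{\lambda}\sqrt{[n][n+2]/pq}$ and, comparing real parts, the stated identity. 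If you want to avoid connections entirely, you would instead need the full machinery of Jones's quadratic tangles paper (inner products in the annular Temperley--Lieb module generated by $S$ and the quadratic relation satisfied by $S$), which is substantially more than the trace bookkeeping your sketch proposes; as written, the proposal does not yet contain a proof.
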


Since $\lambda$ is a root of unity, we know that $-2 \leq \lambda + \lambda^{-1} \leq 2$.  Hence the QT obstruction gives an inequality, and (as observed by Zhengwei Liu) this inequality turns out to be precisely the one in the triple-single obstruction \cite[Lemma 3.3]{index5-part2}.  Thus the QT obstruction is stronger (replacing an interval of possibilities with a finite list) when both apply, but the triple-single obstruction has a weaker assumption.  The main result of this paper is the following mutual generalization of Theorems \ref{TS} and \ref{QT}.

\begin{thm} \label{mainresult}
Suppose that $\gamma_3$ is $1$-valent, then 
$$r+\frac{1}{r} = \frac{\lambda+\lambda^{-1}+2}{[n][n+2]}+2.$$
\end{thm}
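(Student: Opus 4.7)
The plan is to combine the connection-theoretic techniques behind the triple-single obstruction with the planar-algebra computation underlying the quadratic tangles obstruction, using rotation to bridge between the principal and dual principal graphs.

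I would begin by constructing the key low-weight element on the dual side. Because $\gamma_3$ is $1$-valent at depth $n$ of $\Gamma'$, the minimal projection $p_{\gamma_3}$ in the dual planar algebra decomposes as an explicit Temperley-Lieb piece plus a single low-weight element $S'$, with $\tr(S' \cdot S')$ and the coupling of $p_{\gamma_3}$ to the relevant Jones-Wenzl idempotent determined purely by $\dim\gamma_3$, $[n]$, and $[n+2]$. The goal is then to rotate $S'$ over to the $\Gamma$-side and read off the resulting constraint on $\dim\alpha_2$ and $\dim\alpha_3$.

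The central step, and the main obstacle, is to show that $\rho(S')$ lies in the two-dimensional low-weight subspace spanned by $p_{\alpha_2} - t_2$ and $p_{\alpha_3} - t_3$ (where $t_i$ denotes the TL correction), \emph{without} the hypothesis that $\gamma_2$ is $3$-valent; otherwise extra depth-$n$ vertices on $\Gamma'$ could in principle pollute the rotation. I would handle this by a connection calculation exploiting the $1$-valence of $\gamma_3$: the paths of length $2n$ contributing to the rotation of $p_{\gamma_3}$ must pass through $\gamma_3$ at the turning point, and its $1$-valence forces those paths to return through $\beta'$, so after rotating the image is supported only on projections at depth $n$ connected to $\beta$ --- namely $p_{\alpha_2}$, $p_{\alpha_3}$, and a Temperley-Lieb term. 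This is exactly where the mix of planar algebras and connections, in the spirit of \cite{index5-part3}, becomes essential: the assumption on $\gamma_2$ disappears because the $1$-valence of $\gamma_3$ alone localizes the rotation, even though the dual graph at depth $n$ may have many more vertices.

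Once this localization is in hand, I would expand $\rho(S') = c_2 p_{\alpha_2} + c_3 p_{\alpha_3} + (\mathrm{TL})$, impose the low-weight condition $\tr_{n-1}(\rho(S')) = 0$ to fix $c_2 : c_3$ in terms of $\dim\alpha_2, \dim\alpha_3$, and then compute $\tr(\rho(S') \cdot S')$ and $\tr(\rho(S') \cdot \rho(S'))$ in two different ways: once via the explicit expansion in $p_{\alpha_2}, p_{\alpha_3}$, giving an expression in $r = \dim\alpha_2/\dim\alpha_3$; and once via the rotational eigenvalue identity relating these traces to $\lambda+\lambda^{-1}$. Equating the two computations and simplifying using the standard quantum-number identities will produce $r + r^{-1} = (\lambda+\lambda^{-1}+2)/([n][n+2]) + 2$, which is the desired statement, and one sees directly that neither the $3$-valence of $\gamma_2$ nor anything about the depth-$n$ structure of $\Gamma'$ beyond $\gamma_3$ has been used.
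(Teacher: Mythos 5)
The obstacle you single out as central is not actually where the difficulty lies, and the step you rely on to finish cannot produce the stated identity. On the first point: since the subfactor is $n-1$ supertransitive with an initial triple point, the perpendicular complement of Temperley--Lieb in the $n$-box space is $1$-dimensional on each side, spanned by $T = \tfrac{1}{\sqrt r}\alpha_2 - \sqrt r\,\alpha_3$ and $\check T = \tfrac{1}{\sqrt{\check r}}\gamma_2 - \sqrt{\check r}\,\gamma_3$, and the half-click rotation, being an isometry preserving Temperley--Lieb, automatically carries one complement onto the other. So your rotated low-weight element lands in the span of $p_{\alpha_2}, p_{\alpha_3}$ modulo Temperley--Lieb for free, with no hypothesis on $\gamma_2$ and no path-counting argument; this is exactly the easy unnumbered lemma of the paper. (Note also that it is $\rho^{\frac12}$, not $\rho$, that moves you between $\Gamma'$ and $\Gamma$; the full rotation preserves each side.)

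The genuine gap is in your final step. Because both low-weight spaces are $1$-dimensional and $\rho^{\frac12}$ is an isometry, the quantities you propose to compare carry no information linking $\lambda$ to $r$: $\tr\bigl(\rho^{\frac12}(S')\,\rho^{\frac12}(S')\bigr) = \tr(S'S')$ is a tautology, and $\tr\bigl(\rho(S')\,S'\bigr) = \lambda\,\tr(S'S')$ is just the definition of the eigenvalue. In Jones's quadratic tangles argument the coupling between $\lambda$ and the dimensions comes from higher moments (pairings involving $S^2$), and computing those is precisely where the $3$-valence of $\gamma_2$ is needed --- so a purely planar-algebraic execution of your plan reintroduces the hypothesis you are trying to drop. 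What replaces it in the paper is quantitative input from the connection: the $1$-valence of $\gamma_3$ gives $\dim\gamma_2 = [n+2]/[2]$ and $\dim\gamma_3 = [n]/[2]$, and via the normalization condition and unitarity it pins down the branch matrix $U$ up to two phases $\sigma, \tau$, whose real parts are determined by orthogonality of rows, $1 + \sigma p + \tau q = 0$ with $p = \dim\alpha_2$, $q = \dim\alpha_3$. Recognizing this gauge representative as the diagrammatic branch matrix (positivity of the first row and column, Lemma \ref{recognize}) allows one to apply Corollary \ref{eigenformula}: $U$ sends $(0,\sqrt q, -\sqrt p)$ to $\sqrt\lambda\,\bigl(0,\sqrt{[n+2]/[2]},-\sqrt{[n]/[2]}\bigr)$, and the middle coordinate gives $\sigma - \tau = \sqrt\lambda\,\sqrt{[n][n+2]/(pq)}$; taking real parts and squaring yields the theorem. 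Your proposal invokes the connection only for the unnecessary localization and never extracts an equation of this kind, so as written it cannot reach $r + \tfrac1r = \tfrac{\lambda+\lambda^{-1}+2}{[n][n+2]} + 2$.
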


The main ideas in this paper came out of joint work with Scott Morrison, and I would like to thank him for many helpful conversations.  I would also like to thank Vaughan Jones, Dave Penneys, and Emily Peters.   This work was supported by an NSF Postdoctoral Fellowship at Columbia University and DARPA grant HR0011-11-1-0001.

\section{Background}

We quickly summarize the key idea of \cite[\S 5.2]{index5-part3}, which is that the action of rotation on the planar algebra can be read off from the connection.  Since rotational eigenvalues must be roots of unity this gives highly nontrivial constraints on candidate connections.  We assume that the reader is familiar with both planar algebras and connections, referring the reader to \cite{index5-part3} for more detail.

Given a subfactor $N \subset M$ we get a certain collection of matrices called a connection.  This connection depends on a choice of certain intertwiners, and thus is only well-defined up to gauge automorphisms.  Let the branch matrix $U$ denote the $3$-by-$3$ matrix coming from the connection at the initial branch vertex of $\Gamma$.   The key idea from \cite[\S 5.2]{index5-part3} is that there is a canonical gauge choice for $U$, called the diagrammatic branch matrix, coming from the planar algebra.  This choice is both easy to recognize and has nice properties, as captured by the following two results.

\begin{lem} \cite[Lemma 5.6]{index5-part3} \label{recognize}
When $n$ is odd the diagrammatic branch matrix is characterized within its gauge class by the property that all the entries in the first row and column are positive real numbers.
\end{lem}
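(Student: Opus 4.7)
The plan is to separate the statement into two parts: uniqueness (the two positivity conditions pin down a single representative in the gauge orbit) and existence (the diagrammatic branch matrix actually satisfies them).

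I would first set up the gauge action on the branch matrix. Gauge transformations act on the $3 \times 3$ matrix $U$ via $U \mapsto D_1 U D_2$, where $D_1 = \mathrm{diag}(a_1,a_2,a_3)$ and $D_2 = \mathrm{diag}(b_1,b_2,b_3)$ are diagonal unitaries with $|a_i|=|b_j|=1$; the $a_i$ and $b_j$ record the independent phase rescalings of the intertwiners attached to the three edges at $\beta$ above and the three below. Only the quotient by the overall scaling $a_i \mapsto c a_i$, $b_j \mapsto c^{-1} b_j$ acts faithfully, so the effective gauge has five phase parameters, which matches the five distinct entries appearing in the first row and column (the corner entry is shared).

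For uniqueness, assume that both $U$ and $D_1 U D_2$ have positive real first row and first column. Positivity of the first row forces $a_1 b_j U_{1j} \in \mathbb{R}_{>0}$ for each $j$, and since $U_{1j} \in \mathbb{R}_{>0}$ by hypothesis this gives $b_j = \overline{a_1}$. Symmetrically the first column yields $a_i = \overline{b_1} = a_1$. Hence $D_1 = a_1 I$ and $D_2 = \overline{a_1} I$, so $D_1 U D_2 = U$. This argument uses that each of the five entries in question is nonzero; for the diagrammatic representative this follows because, by $(n-1)$-supertransitivity together with the fact that $n$ is odd, each of the relevant intertwiner spaces at $\beta$ is one-dimensional and the basis vector assigned to it by the planar algebra has strictly positive self-pairing, so the corresponding branch-matrix entry cannot vanish.

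For existence, I would unwind the planar-algebraic construction of the diagrammatic branch matrix from \cite[\S5.2]{index5-part3}. Its entries are, up to standard quantum-dimensional normalizations, inner products of distinguished planar tangles representing the intertwiners at $\beta$. The first row and column are precisely those pairings in which one of the two factors is the canonical vector attached to the ``trunk'' edge coming from $\alpha_1$ or $\gamma_1$; this forces the pairing to collapse (via the appropriate trace) to a nonnegative real multiple of the norm-squared of the other factor, so it is manifestly a positive real number. The hypothesis that $n$ is odd is used here to ensure that the trunk edge sits on the correct shading side for the planar tangle computation to make sense.

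The main obstacle is the existence step: the uniqueness argument is essentially an exercise in phases on $(U(1))^6$, whereas showing that the diagrammatic entries are not merely real but actually positive requires careful bookkeeping of the normalizations in the planar algebra and of the orientation/shading conventions implicit in the definition of the diagrammatic gauge.
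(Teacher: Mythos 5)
First, a point of comparison: this paper does not prove Lemma \ref{recognize} at all --- it is imported verbatim from \cite[Lemma 5.6]{index5-part3} --- so the only ``paper proof'' to measure your proposal against is the one in that reference, whose overall shape (gauge-phase uniqueness plus a planar-algebraic positivity computation) your sketch does mirror. Your uniqueness half is correct and essentially complete: since the graphs are simply laced near $\beta$, gauge transformations act by $U \mapsto D_1 U D_2$ with $D_1, D_2$ diagonal phase matrices, and if both $U$ and $D_1 U D_2$ have strictly positive first row and column then $a_1 b_j = 1$ and $a_i b_1 = 1$ force $D_1 = a_1 I$, $D_2 = \overline{a_1} I$, hence $D_1 U D_2 = U$. (Nonvanishing of the five relevant entries is cheaper than your supertransitivity argument suggests: the first row and column sit over the trunk vertices $\alpha_1$ and $\gamma_1$, where the graphs are of type $A$, so the normalization axiom for connections pins down the moduli of these entries as explicit positive ratios of quantum dimensions --- this is visible in the explicit matrix displayed in Section 3 of the present paper.)

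The genuine gap is in the existence half, which is exactly where the content of the cited lemma lies. The entries of the diagrammatic branch matrix are not self-pairings; they are coefficients expressing one diagrammatically defined family of elements in terms of another, i.e.\ pairings of two \emph{different} vectors, so your claim that each first-row or first-column entry ``collapses to a nonnegative real multiple of the norm-squared of the other factor'' is an assertion rather than an argument --- a priori such a pairing could be negative or complex, and indeed the remaining entries of $U$ (the phases $\sigma$, $\tau$ in Section 3) are genuinely complex in general. What makes the first row and column special is that the trunk edge lives in the Temperley--Lieb part of the planar algebra, so those entries reduce to explicit Temperley--Lieb evaluations (built from Jones--Wenzl projections and quantum dimensions) which are manifestly positive; carrying out that computation, with its normalization bookkeeping, is precisely what \cite{index5-part3} does, and it is also where the hypothesis that $n$ is odd enters concretely, by fixing the shading of the trunk strand so that the relevant closures make sense and come out positive (for $n$ even the statement must be modified). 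Your proposal explicitly defers this step as ``careful bookkeeping,'' so as written it establishes uniqueness within the gauge class but not that the diagrammatic representative actually satisfies the positivity condition.
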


\begin{prop}
Let $U$ be the diagrammatic branch matrix for a subfactor with an initial triple point.  Suppose that $x$ is an $n$-box in the perpendicular complement of Temperley-Lieb, and write $x = a_2\frac{\alpha_2}{\sqrt{\dim \alpha_2}} + a_3 \frac{\alpha_3}{\sqrt{\dim \alpha_3}}$.  Let $(c_1, c_2, c_3) = U(0, a_2, a_3)$.  Then $c_1= 0$ and $c_2 \frac{\gamma_2}{\sqrt{\dim \gamma_2}}  + c_3 \frac{\gamma_3}{\sqrt{\dim \gamma_3}}$ is $\rho^{\frac{1}{2}} (x)$.
\end{prop}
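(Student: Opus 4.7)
The plan is to unpack the connection-to-rotation dictionary of \cite[\S 5.2]{index5-part3} in the specific setting of an initial triple point. By that dictionary, the action of the half-rotation $\rho^{1/2}$ on the depth-$n$ part of the planar algebra can be read directly off of the values of the connection, provided one works in the correct basis. The point of the diagrammatic gauge is precisely that it is the gauge in which the normalized simple projections $\alpha_i/\sqrt{\dim \alpha_i}$ and $\gamma_j/\sqrt{\dim \gamma_j}$ are the correct bases, so that the dictionary produces the matrix $U$ on the nose.

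Concretely, I would organize the proof as follows. The depth-$n$ part of the planar algebra decomposes along paths in $\Gamma$ through $\beta$, giving three natural basis directions corresponding to the three edges at $\beta$: the edge to $\alpha_1$ (carrying the depth-$n$ Temperley--Lieb contribution), and the edges to $\alpha_2,\alpha_3$. The analogous decomposition holds at $\beta'$ on the dual side. The general formula of \cite[\S 5.2]{index5-part3} expresses $\rho^{1/2}$ as a sum over paths weighted by connection entries; at the triple point, this sum reduces to multiplication by the $3\times 3$ branch matrix $U$. Applying this to the vector $(0,a_2,a_3)$ representing $x$ immediately yields $(c_1,c_2,c_3)=U(0,a_2,a_3)$ as the coefficients of $\rho^{1/2}(x)$ in the corresponding dual-side basis. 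The first coordinate on the output side corresponds to the $\gamma_1$ (Temperley--Lieb) direction, and the remaining two coordinates correspond to $\gamma_2/\sqrt{\dim\gamma_2}$ and $\gamma_3/\sqrt{\dim\gamma_3}$. To argue $c_1=0$, observe that $\rho^{1/2}$ preserves the Temperley--Lieb subalgebra, hence preserves the Temperley--Lieb-perpendicular complement; since $x$ lies in this complement on the $\Gamma$ side (forcing the zero in the first input coordinate), $\rho^{1/2}(x)$ lies in the perpendicular complement on the $\Gamma'$ side and therefore has no $\gamma_1$-contribution.

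The main obstacle I expect is bookkeeping the normalization constants. The identification of the edge-indexed coordinates at $\beta$ with the $L^2$-normalized projections $\alpha_i/\sqrt{\dim\alpha_i}$ is exactly what makes the diagrammatic gauge special, and the work of the proof lies in tracking how the factors $\sqrt{\dim \alpha_i}$ and $\sqrt{\dim \gamma_j}$ enter the connection-rotation formula so that the output matrix is literally $U$ (as singled out by Lemma \ref{recognize}) rather than some conjugate or rescaling of it. Once this normalization is fixed, the result is essentially a direct application of the dictionary and the TL-invariance of $\rho^{1/2}$.
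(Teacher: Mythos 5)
Your proposal is correct and takes essentially the same route as the paper: the paper's proof is simply the observation that the proposition is a restatement of \cite[Corollary 5.3]{index5-part3} (the connection-to-rotation dictionary of \S 5.2 there) specialized to an initial triple point, which is exactly the dictionary you unpack, including the observation that $\rho^{\frac{1}{2}}$ preserves the Temperley--Lieb perpendicular complement so that $c_1=0$. The normalization bookkeeping you flag as the main obstacle is precisely what the diagrammatic gauge of \cite{index5-part3} is designed to handle, so deferring it to that reference is what the paper itself does.
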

\begin{proof}
This is a restatement of \cite[Corollary 5.3]{index5-part3} in our special case.  See \cite[pp. 18--19]{index5-part3} for a worked example.
\end{proof}

In order to apply the previous proposition we will want an explicit formula for vectors in the perpendicular complement to Temperley-Lieb in the $n$-box space and the action of rotation there.  Recall that the rotation $\rho$ preserves shading and thus is an endomorphism of each box space, while $\rho^{\frac{1}{2}}$ changes rotation and thus is a map from one box space to a different box space.  We will use $\lambda$ to denote the scalar by which $\rho$ acts on the $1$-dimensional perpendicular complement to Temperley-Lieb in the $n$-box space.  Note that this is an $n$th root of unity.

\begin{lem}
Let $r = \frac{\dim \alpha_2}{\dim \alpha_3}$ and $\check{r} = \frac{\dim \gamma_2}{\dim \gamma_3}$.  Then  $T = \frac{1}{\sqrt{r}} \alpha_2 -   \sqrt{r} \alpha_3$ and $\check{T} = \frac{1}{\sqrt{\check{r}}} \gamma_2 - \sqrt{\check{r}}  \gamma_3$ are each in the perpendicular complement of Temperley-Lieb.  

Furthermore $\rho^{\frac{1}{2}}(T)=  \sqrt{\lambda} \check{T}$, where  $\sqrt{\lambda}$ is  some square root of the rotational eigenvalue for the action of rotation on the perpendicular compliment of Temperley-Lieb.
\end{lem}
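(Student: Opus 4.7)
Since $T$ is supported on the depth-$n$ central projections $\alpha_2,\alpha_3$, it is automatically orthogonal to every element of Temperley-Lieb with central support at strictly lower depth, so it suffices to check orthogonality against the Jones-Wenzl projection $JW_n$. Supertransitivity through depth $n-1$ together with Frobenius reciprocity at the branch vertex $\beta$ (of quantum dimension $[n]$) yields $\dim\alpha_2+\dim\alpha_3 = [n][2]-[n-1] = [n+1] = \tr(JW_n)$, and since the image of $JW_n$ in the $n$-box space is orthogonal to all lower-depth simples, this forces $JW_n = \alpha_2+\alpha_3$. The desired orthogonality
$$\tr(T \cdot JW_n) = \tfrac{1}{\sqrt{r}}\dim\alpha_2 - \sqrt{r}\dim\alpha_3 = 0$$
then follows from $\dim\alpha_2 = r\dim\alpha_3$, and the argument for $\check{T}$ is identical.

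\textbf{Reduction to a scalar.} The half-rotation $\rho^{1/2}$ is a trace-preserving isomorphism of box spaces that carries Temperley-Lieb to Temperley-Lieb, so it restricts to an isomorphism between the two $1$-dimensional perpendicular complements, giving $\rho^{1/2}(T) = c\,\check{T}$ and $\rho^{1/2}(\check{T}) = c'\,T$ for some scalars $c, c'$. A direct calculation using $\alpha_2\alpha_3 = 0$ yields
$$\tr(T^2) = \tfrac{1}{r}\dim\alpha_2 + r\dim\alpha_3 = \dim\alpha_3+\dim\alpha_2 = [n+1],$$
and analogously $\tr(\check{T}^2) = [n+1]$, so $|c| = |c'| = 1$ by isometry. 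Squaring gives $\rho(T) = cc'T$, while by definition $\rho(T) = \lambda T$, so $cc' = \lambda$.

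\textbf{Symmetry via adjoints.} To conclude that $c = c'$ (whence $c^2 = \lambda$), observe that $T$ and $\check{T}$ are self-adjoint, while the star operation in a subfactor planar algebra satisfies $\rho^{1/2}(x)^* = \rho^{-1/2}(x^*)$. Applying this identity to $\rho^{1/2}(T) = c\,\check{T}$ yields $\bar{c}\,\check{T} = \rho^{-1/2}(T)$; applying $\rho^{1/2}$ and invoking $\rho^{1/2}(\check{T}) = c'\,T$ gives $\bar{c}\,c'\,T = T$, so $c' = 1/\bar{c} = c$, and we may define $\sqrt{\lambda} := c$. The only step requiring genuine care is this last one, where the interaction of the star with the shading-reversing half-rotation must be handled consistently; everything else is bookkeeping with quantum dimensions and the $1$-dimensionality of the Temperley-Lieb-perpendicular complement.
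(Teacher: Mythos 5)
Your argument is correct and follows essentially the same route as the paper's (very terse) proof: check perpendicularity to Temperley--Lieb by pairing against the Jones--Wenzl projection, use that $\rho^{\frac{1}{2}}$ is an isometry carrying Temperley--Lieb to Temperley--Lieb to get $\rho^{\frac{1}{2}}(T)=c\,\check{T}$ with $|c|=1$, and then identify the scalar. Your third step is actually the valuable part: the paper's ``compute their norms'' only pins down $|c|=1$ and, after squaring, $cc'=\lambda$; the self-adjointness of $T$ and $\check{T}$ together with the compatibility $(\rho^{\frac{1}{2}}x)^*=\rho^{-\frac{1}{2}}(x^*)$ is exactly what is needed to upgrade this to $c'=c$, hence $c^2=\lambda$, and you supply it correctly. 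One small repair in the first step: your justification of $JW_n=\alpha_2+\alpha_3$ runs the logic backwards, since ``the image of $JW_n$ is orthogonal to all lower-depth simples'' is not a general fact (it fails, for instance, beyond the depth of a finite-depth subfactor) and here it is really a consequence of the equality you are trying to prove. The clean derivation uses the opposite containment: $\alpha_2$ and $\alpha_3$ are new at depth $n$, so they are annihilated by every capped Temperley--Lieb element, giving $\alpha_2+\alpha_3\le JW_n$, and then $\tr(JW_n)=[n+1]=\dim\alpha_2+\dim\alpha_3$ forces equality. Alternatively you can bypass $JW_n=\alpha_2+\alpha_3$ entirely: once $T$ is known to be orthogonal to all capped diagrams, orthogonality to the identity, i.e.\ $\tr(T)=\sqrt{\dim\alpha_2\dim\alpha_3}-\sqrt{\dim\alpha_2\dim\alpha_3}=0$, already gives $T\perp\TL_n$. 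With that adjustment the proof is complete, and the same remarks apply verbatim to $\check{T}$ on the dual side.
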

\begin{proof}
These calculations (with slightly different conventions) were done in an early version of \cite{math/1007.1158}.  Seeing that $T$ and $\check{T}$ are perpendicular to Temperley-Lieb is straightforward (you only need to work out their inner product with two specific Jones-Wenzl projections).  Since half-click rotation preserves Temperley-Lieb and is an isometry, it also preserves the perpendicular complement of Temperley-Lieb.  Thus $\rho^{\frac{1}{2}}(T)$ is some scalar multiple of $\check{T}$.  To work out which scalar multiple this is you compute their norms. This tells you that the square of this scalar is $\lambda$.
\end{proof}

\begin{rem}
There are many square roots in this paper.  Other than $\sqrt{\lambda}$ all square roots are positive square roots of positive numbers.  $\sqrt{\lambda}$ will always be chosen such that the previous lemma works.  In the final statement of the main theorem no $\sqrt{\lambda}$ appears, so this subtlety is not very important.
\end{rem}

Combining the previous two results we have the following concrete statement, which will supply the main ingredient of our proof of Theorem \ref{mainresult}.

\begin{cor} \label{eigenformula}
The diagrammatic branch matrix $U$ sends $$(0, \sqrt{\dim(\alpha_3)}, -\sqrt{\dim(\alpha_2)}) \mapsto \sqrt{\lambda} (0, \sqrt{\dim(\gamma_3)}, -\sqrt{\dim (\gamma_2)}).$$
\end{cor}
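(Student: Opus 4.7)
The plan is to apply the preceding Proposition directly to the specific vector $x = T$ supplied by the preceding Lemma, then translate the coefficients back and forth between the two bases that the Proposition uses versus the $\alpha_i$ and $\gamma_j$ themselves. This is really a matter of unpacking definitions rather than proving anything new, so I expect no obstacle beyond bookkeeping of square roots.

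First I would take $x = T = \frac{1}{\sqrt{r}}\alpha_2 - \sqrt{r}\,\alpha_3$, which by the Lemma lies in the perpendicular complement of Temperley-Lieb. Writing $x = a_2 \frac{\alpha_2}{\sqrt{\dim\alpha_2}} + a_3 \frac{\alpha_3}{\sqrt{\dim\alpha_3}}$, I would read off $a_2 = \frac{\sqrt{\dim\alpha_2}}{\sqrt{r}}$ and $a_3 = -\sqrt{r}\sqrt{\dim\alpha_3}$. Using $r = \dim\alpha_2/\dim\alpha_3$, so that $\sqrt{r} = \sqrt{\dim\alpha_2}/\sqrt{\dim\alpha_3}$, these simplify to $a_2 = \sqrt{\dim\alpha_3}$ and $a_3 = -\sqrt{\dim\alpha_2}$. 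Thus the input vector $(0, a_2, a_3)$ to $U$ is exactly $(0, \sqrt{\dim\alpha_3}, -\sqrt{\dim\alpha_2})$.

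Next I would identify the output. By the Proposition the output $(c_1, c_2, c_3) = U(0, a_2, a_3)$ has $c_1 = 0$ and satisfies
$$c_2 \frac{\gamma_2}{\sqrt{\dim\gamma_2}} + c_3 \frac{\gamma_3}{\sqrt{\dim\gamma_3}} = \rho^{1/2}(T).$$
By the Lemma the right-hand side equals $\sqrt{\lambda}\,\check{T} = \sqrt{\lambda}\bigl(\frac{1}{\sqrt{\check r}}\gamma_2 - \sqrt{\check r}\,\gamma_3\bigr)$. Applying the same normalization calculation as before, with $\check r = \dim\gamma_2/\dim\gamma_3$, yields $c_2 = \sqrt{\lambda}\sqrt{\dim\gamma_3}$ and $c_3 = -\sqrt{\lambda}\sqrt{\dim\gamma_2}$.

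Putting the two computations together gives exactly the claimed identity
$$U(0, \sqrt{\dim\alpha_3}, -\sqrt{\dim\alpha_2}) = \sqrt{\lambda}\,(0, \sqrt{\dim\gamma_3}, -\sqrt{\dim\gamma_2}).$$
The only subtlety is keeping the choice of $\sqrt{\lambda}$ consistent with the one fixed in the Lemma, but as noted in the Remark this sign convention disappears from the final eigenvalue equation in Theorem \ref{mainresult}.
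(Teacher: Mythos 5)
Your proposal is correct and is exactly the argument the paper intends: the corollary is stated as the combination of the preceding Proposition and Lemma, and your computation of $a_2 = \sqrt{\dim\alpha_3}$, $a_3 = -\sqrt{\dim\alpha_2}$, $c_2 = \sqrt{\lambda}\sqrt{\dim\gamma_3}$, $c_3 = -\sqrt{\lambda}\sqrt{\dim\gamma_2}$ just makes that combination explicit. The only difference is that the paper leaves this bookkeeping to the reader, while you have written it out.
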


\section{Proof of Theorem \ref{mainresult}}

The idea of this argument is that having a $1$-valent vertex allows us to solve for the branch matrix, and thus we can read off the rotational eigenvalue (since the diagrammatic branch matrix acts on the appropriate vectors by rotation).  This gives an identity between the dimensions of objects and the rotational eigenvalue.

We begin with a quick calculation of the branch matrix following the proof of the triple-single obstruction \cite[Thm 3.1]{index5-part2}.  Since $\alpha_1$, $\gamma_1$, $\beta$, and $\beta'$ are in the initial string there dimensions are $[n-1]$, $[n-1]$, $[n]$, and $[n]$, respectively.  Since $\gamma_3$ is $1$-valent, we have $\dim \gamma_2 = \frac{[n+2]}{[2]}$ and $\dim \gamma_3 = \frac{[n]}{[2]}$.  Using the $1$-valence of $\gamma_3$ the normalization condition on connections determines the magnitude of several of the entries in the branch matrix.  Furthermore, unitarity of $U$ allows us to work out several more of the entries.  In particular, the branch matrix is gauge equivalent to the matrix below, where $p = \dim(\alpha_2)$ and $q = \dim(\alpha_3)$, where $\sigma$ and $\tau$ are unknown phases, and where $?$ denotes unknown entries which will play no role in the calculation.

$$U = \begin{pmatrix}
\frac{1}{[n]} & \sqrt{[n-1] p} & \sqrt{[n-1] q} \\
\sqrt{\frac{[n-1]}{[2][n]}} & \sigma \sqrt{\frac{p}{[2][n]}} & \tau \sqrt{\frac{q}{[2][n]}} \\
\sqrt{\frac{[n-1][n+2]}{[2][n]^2}} & ? & ? 
\end{pmatrix}$$

The first row and column of this matrix are clearly positive, so by Lemma \ref{recognize} we see that $U$ is the diagrammatic branch matrix.  

\begin{rem}
This matrix is the transpose of the matrix found in \cite{index5-part2} because the calculation there is done for $\Gamma'$ instead of $\Gamma$.  As shown in \cite{index5-part3}, the diagrammatic branch matrices of $\Gamma$ and $\Gamma'$ are always transposes.
\end{rem}

We would like to solve for $\sigma$ and $\tau$.  Orthogonality of the first two rows of $U$ tells us that $$1+\sigma p + \tau q = 0.$$    Although $1+\sigma p + \tau q = 0$ is one equation in two unknowns, it actually determines $\sigma$ and $\tau$ since they are phases: 
$$\sigma = - \frac{1+\tau q}{p}$$
$$1 = \sigma \bar{\sigma} = \frac{1+\tau q}{p} \frac{1+\bar{\tau} q}{p} = \frac{1 + (\tau + \bar{\tau}) q + q^2}{p^2}$$
$$ \tau + \bar{\tau} = \frac{p^2 - q^2 -1}{q}.$$

This determines the real part of $\tau$, and thus $\tau$ itself.   Similarly, $\sigma + \bar{\sigma} = \frac{q^2 - p^2 -1}{p}$.

Now that we have a very explicit understanding of $U$ we apply it to a rotational eigenvector. Corollary \ref{eigenformula}
 tells us that $U$ sends $$(0, \sqrt{q}, -\sqrt{p}) \mapsto \sqrt{\lambda} \left(0, \sqrt{\frac{[n+2]}{[2]}}, -\sqrt{\frac{[n]}{[2]}}\right).$$ 
Looking at the middle coordinate of that identity we see that $$\sigma - \tau = \sqrt{\lambda} \sqrt{\frac{[n+2][n]}{p q}} .$$

Comparing the real parts of both sides yields the following formula
\begin{align*}
(\sqrt{\lambda}+\frac{1}{\sqrt{\lambda}}) \sqrt{\frac{[n+2][n]}{p q}} &= (\sigma + \bar{\sigma}) - (\tau + \bar{\tau})  \\
&= \frac{q^2 - p^2 -1}{p}-\frac{p^2 - q^2 -1}{q} = \frac{(q-p)((p+q)^2 -1)}{pq} \\
&= \frac{(q-p)([n+1]^2 -1)}{pq} =\frac{(q-p)([n][n+2])}{pq}.
\end{align*}

Squaring both sides and rearranging proves the theorem.

\begin{rem}
You might guess that $\sigma - \tau = \sqrt{\lambda} \left(\sqrt{\frac{[n+2][n]}{p q}} \right)$ would give a second condition coming from the imaginary parts.  In fact there's no new information there, because the two sides automatically have the same norm.
\end{rem}

\newcommand{\urlprefix}{}

\bibliographystyle{alpha}
\bibliography{bibliography}

\end{document}